\let\originalleft\left
\let\originalright\right
\renewcommand{\left}{\mathopen{}\mathclose\bgroup\originalleft}
\renewcommand{\right}{\aftergroup\egroup\originalright}
\begin{document}

\newcommand\cM{\mathcal{M}}
\newcommand\cN{\mathcal{N}}

\newtheorem{theorem}{Theorem}[section]
\newtheorem{corollary}[theorem]{Corollary}
\newtheorem{lemma}[theorem]{Lemma}
\newtheorem{proposition}[theorem]{Proposition}

\theoremstyle{definition}
\newtheorem{definition}{Definition}[section]
\newtheorem{example}[definition]{Example}

\theoremstyle{remark}
\newtheorem{remark}{Remark}[section]

\title{
Three forms of dimension reduction for border-collision bifurcations.
}
\author{
D.J.W.~Simpson\\\\
School of Mathematical and Computational Sciences\\
Massey University\\
Palmerston North, 4410\\
New Zealand
}
\maketitle


\begin{abstract}

For dynamical systems that switch between different modes of operation, parameter variation can cause periodic solutions to lose or acquire new switching events. When this causes the eigenvalues (stability multipliers) associated with the solution to change discontinuously, we show that if one eigenvalue remains continuous then all local invariant sets of the leading-order approximation to the system occur on a lower dimensional manifold. This allows us to analyse the dynamics with fewer variables, which is particularly helpful when the dynamics is chaotic. We compare this to two other codimension-two scenarios for which dimension reduction can be achieved.

\end{abstract}

\section{Introduction}
\label{sec:intro}

Bifurcations are critical parameter values at which the
behaviour of a dynamical system changes fundamentally.
There many types of bifurcations, and much theory for explaining the related dynamics \cite{Ku04}.
A period-doubling bifurcation, for example, occurs when an eigenvalue associated with periodic solution becomes $-1$,
and creates a period-doubled solution when a non-degeneracy condition holds.

The wide applicability of bifurcation theory relies heavily on dimension reduction.
This allows the theory for the related dynamics to be applied to systems of any number of dimensions.
This can be achieved when the pertinent dynamics occurs entirely on a low-dimensional manifold.
In the case of period-doubling bifurcations, this manifold is a centre manifold associated with the eigenvalue $-1$.

But dimension reduction is usually not possible for border-collision bifurcations (BCBs).
These occur in systems with switches, jumps, or thresholds,
when a periodic solution experiences a change to its pattern of switching events \cite{DiBu08}.
BCBs are framed most simply in terms of piecewise-smooth maps,
where the bifurcation occurs when a fixed point collides with a switching manifold on which the map is nonsmooth.
Recent studies where BCBs play a key role
include sleep-wake patterns \cite{AtPi22},
influenza outbreaks \cite{RoHi19b},
and threshold harvesting for fisheries \cite{HiLi19}.

This Letter treats BCBs for continuous maps on $\mathbb{R}^n$
where each piece of the map is smoothly extendable through the switching manifold.
The local dynamics are typically characterised
by two sets of $n$ eigenvalues, $S_L$ and $S_R$,
associated with the fixed point
on each side of the switching manifold \cite{Si16}.
These dynamics may be chaotic,
and this occurs in models of open economies \cite{CaJa06},
power converters \cite{DiGa98,YuBa98},
and mechanical systems with stick-slip friction \cite{DiKo03,SzOs08}.
More types of dynamical behaviour is possible with every additional dimension \cite{GlJe15},
so in general dimension reduction of BCBs is not possible.

Given the importance of dimension reduction to the success of bifurcation theory broadly,
it is instructive to explore the extent to which it can be applied to BCBs.
The purpose of this Letter is to highlight
three scenarios in which dimension reduction is possible:
$S_L$ or $S_R$ contain a zero eigenvalue,
an eigenvalue with unit modulus,
or they share an eigenvalue.
Each scenario occurs as a codimension-two BCB,
and dimension reduction can be used to help explain the local dynamics.
The last scenario does not seem to have been reported previously, outside examples with $n=2$ in \cite{GhMc24}.
It can be interpreted as the special case that one eigenvalue does not change discontinuously as the bifurcation is crossed.
Below we show that when this occurs continuity forces the piecewise-linear approximation to the map
to admit an invariant hyperplane containing all bounded invariant sets.

\section{Preliminaries}
\label{sec:backg}

Consider a map
\begin{equation}
f(x;\xi) = \begin{cases}
f_L(x;\xi), & h(x;\xi) < 0, \\
f_R(x;\xi), & h(x;\xi) \ge 0,
\end{cases}
\label{eq:f}
\end{equation}
where $f_L : \mathbb{R}^n \times \mathbb{R}^m \to \mathbb{R}^n$,
$f_R : \mathbb{R}^n \times \mathbb{R}^m \to \mathbb{R}^n$,
and $h : \mathbb{R}^n \times \mathbb{R}^m \to \mathbb{R}$ are smooth.
For a given parameter vector $\xi \in \mathbb{R}^m$,
we are interested in the behaviour of the state variable $x \in \mathbb{R}^n$
under repeated iterations $x \mapsto f(x;\xi)$.

We assume the equation $h(x;\xi) = 0$ defines a smooth codimension-one switching manifold.
We also assume $f$ is continuous,
that is $f_L(x;\xi) = f_R(x;\xi)$ at all points on the switching manifold.

BCBs occur when a fixed point of $f_L$ or $f_R$ collides with the switching manifold as $\xi$ is varied.
The bifurcation only affects the dynamics locally,
so it is helpful to Taylor expand $f_L$, $f_R$, and $h$ about the bifurcation,
and truncate to leading order.
The resulting approximation is piecewise-linear
and often captures all local dynamics, for details see \cite{Si16}.

To this end, let us now consider an arbitrary piecewise-linear continuous map
\begin{equation}
g(x) = \begin{cases}
A_L x + b \,, & c^{\sf T} x < 0, \\
A_R x + b \,, & c^{\sf T} x \ge 0,
\end{cases}
\label{eq:g}
\end{equation}
where $A_L, A_R \in \mathbb{R}^{n \times n}$ and $b, c \in \mathbb{R}^n$.
The switching manifold of $g$ is
\begin{equation}
\Sigma = \left\{ x \in \mathbb{R}^n \,\middle|\, c^{\sf T} x = 0 \right\},
\label{eq:Sigma}
\end{equation}
and the assumption of continuity implies
\begin{equation}
A_R = A_L + p c^{\sf T},
\label{eq:continuityConstraint}
\end{equation}
for some $p \in \mathbb{R}^n$.
We write $g_L(x) = A_L x + b$
and $g_R(x) = A_R x + b$ for the left and right pieces of $g$.

If $g$ satisfies an observability condition \cite{Si16,Di03},
it can be converted to the border-collision normal form (BCNF) via an affine change of coordinates.
In two dimensions the BCNF consists of \eqref{eq:g} with
\begin{equation}
\begin{aligned}
A_L &= \begin{bmatrix} \tau_L & 1 \\ -\delta_L & 0 \end{bmatrix}, & \quad
A_R &= \begin{bmatrix} \tau_R & 1 \\ -\delta_R & 0 \end{bmatrix}, \\
b &= \begin{bmatrix} 1 \\ 0 \end{bmatrix}, &
c &= \begin{bmatrix} 1 \\ 0 \end{bmatrix},
\end{aligned}
\label{eq:2dform}
\end{equation}
while in three dimensions it uses
\begin{equation}
\begin{aligned}
A_L &= \begin{bmatrix} \tau_L & 1 & 0 \\ -\sigma_L & 0 & 1 \\ \delta_L & 0 & 0 \end{bmatrix}, & \quad
A_R &= \begin{bmatrix} \tau_R & 1 & 0 \\ -\sigma_R & 0 & 1 \\ \delta_R & 0 & 0 \end{bmatrix}, \\
b &= \begin{bmatrix} 1 \\ 0 \\ 0 \end{bmatrix}, &
c &= \begin{bmatrix} 1 \\ 0 \\ 0 \end{bmatrix}.
\end{aligned}
\label{eq:3dform}
\end{equation}
In general the BCNF has $2 n$ parameters.
These are the entries in the first columns of $A_L$ and $A_R$,
and are the coefficients of the characteristic polynomials of $A_L$ and $A_R$.
Hence they are characterised by the eigenvalues of $A_L$ and $A_R$,
which correspond to the sets $S_L$ and $S_R$ for the fixed point on each side of the BCB.
This is because the coordinate changes required to produce the BCNF
impose similarity transforms on the Jacobian matrices,
so do not alter their eigenvalues.

\section{Shared eigenvalues}
\label{sec:shared}

Here we study the scenario that
an eigenvalue $\lambda \in \mathbb{R}$ is common to both $S_L$ and $S_R$.
We first provide two examples,
then show how dimension reduction can be achieved in general.

\begin{figure}[b!]
\begin{center}
\includegraphics[width=\textwidth]{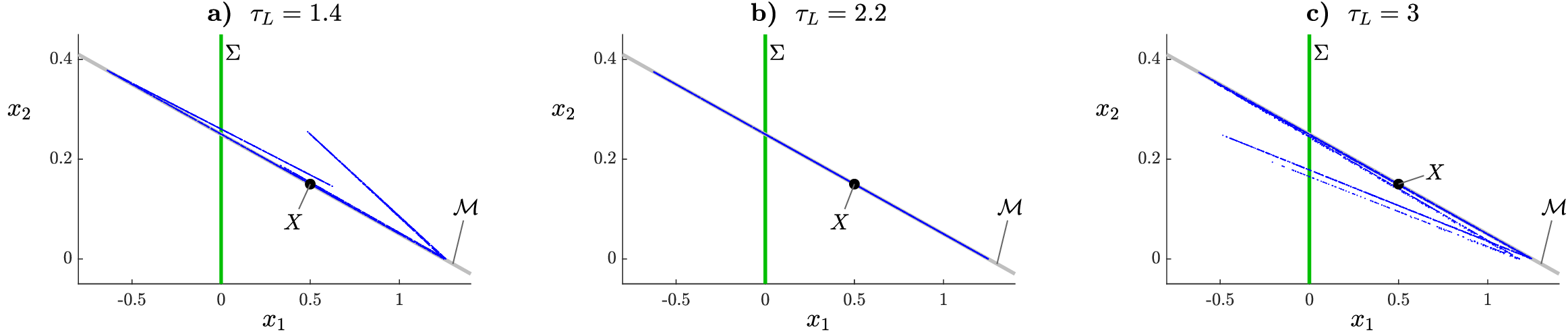}
\caption{
Phase portraits of \eqref{eq:g} with \eqref{eq:2dform}, \eqref{eq:2dparam},
and three different values of $\tau_L$.
The blue dots indicate the attractor,
$\cM$ is an invariant line for the right piece of the map,
and $\Sigma$ is the switching manifold.
\label{fig:QQ_a}
} 
\end{center}
\end{figure}

\begin{figure}[b!]
\begin{center}
\includegraphics[height=4cm]{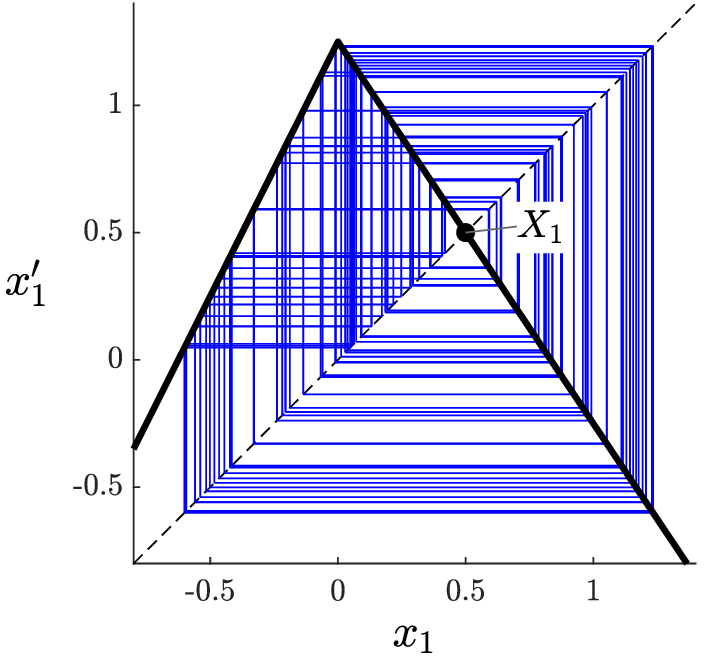}
\caption{
A cobweb diagram of the restriction of \eqref{eq:g} to $\cM$
for the parameter values of Fig.~\ref{fig:QQ_a}b.
This is a skew tent map with slopes $2$ and $-1.5$.
\label{fig:QQ_b}
} 
\end{center}
\end{figure}

Fig.~\ref{fig:QQ_a} shows phase portraits of the two-dimensional BCNF with
\begin{align}
\delta_L &= 0.4, &
\tau_R &= -1.3, &
\delta_R &= -0.3,
\label{eq:2dparam}
\end{align}
and three different values of $\tau_L$.
These represent the dynamics created by a family of BCBs parameterised by $\tau_L$.
The blue dots show $3000$ points of a typical forward orbit after transient dynamics have decayed,
and indicate the attractor of the map.
The black dot is the fixed point
\begin{equation}
X = (I - A_R)^{-1} b,
\label{eq:X}
\end{equation}
while the grey line is
\begin{equation}
\cM = \left\{ x \in \mathbb{R}^n \,\middle|\, u^{\sf T} (x - X) = 0 \right\},
\label{eq:M}
\end{equation}
where $u^{\sf T}$ is a left eigenvector of $A_R$ corresponding to the eigenvalue $\lambda = 0.2$.
This line is the affine subspace generated by the other eigenvalue of $A_R$,
and is invariant under the right piece of the map. 

The middle phase portrait uses $\tau_L = 2.2$ with which $\lambda$ is also an eigenvalue of $A_L$.
Here $\cM$ is also invariant under $g_L$,
and the restriction of $g$ to $\cM$ is a skew tent map, see Fig.~\ref{fig:QQ_b}.

\begin{figure}[b!]
\begin{center}
\includegraphics[width=\textwidth]{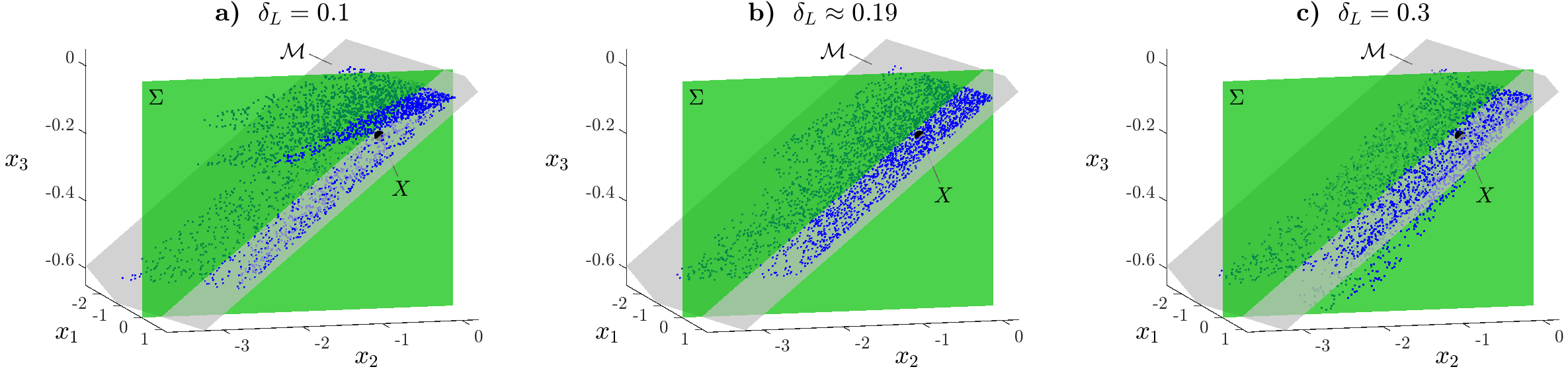}
\caption{
Phase portraits of \eqref{eq:g} with \eqref{eq:3dform}, \eqref{eq:3dparam},
and three different values of $\delta_L$.
In each plot the plane $\cM$ is invariant under the right piece of the map.
In panel (b), $\delta \approx 0.18973854$ is such that $A_L$ and $A_R$ have a common eigenvalue.
In this case $\cM$ is invariant under both pieces of the map.
\label{fig:QQ_c}
} 
\end{center}
\end{figure}

\begin{figure}[b!]
\begin{center}
\includegraphics[height=4cm]{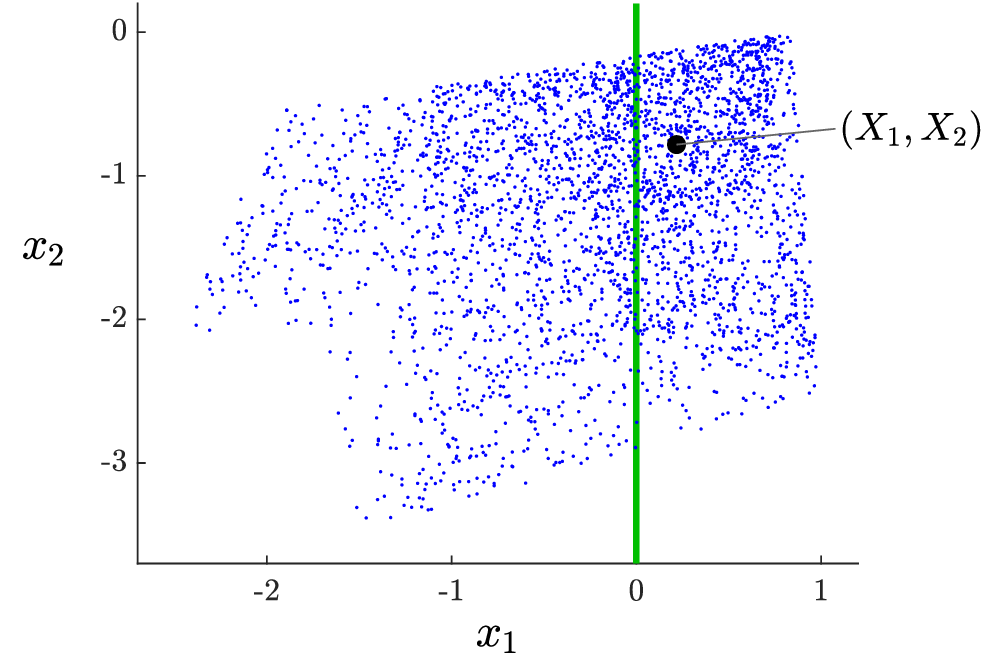}
\caption{
A phase portrait of the restriction of \eqref{eq:g} to $\cM$
for the parameter values of Fig.~\ref{fig:QQ_c}b.
\label{fig:QQ_d}
} 
\end{center}
\end{figure}

As another example, Fig.~\ref{fig:QQ_c} illustrates the dynamics of the three-dimensional BCNF with
\begin{align}
\tau_L &= 0, &
\sigma_L &= -1, &
\tau_R &= 0, &
\sigma_R &= 3, &
\delta_R &= -0.6,
\label{eq:3dparam}
\end{align}
and different values of $\delta_L$.
Again $X$ and $\cM$ are given by \eqref{eq:X} and \eqref{eq:M}
(now $\lambda \approx -0.1974$ is an eigenvalue of $A_R$ with corresponding left eigenvector $u^{\sf T}$).
The middle phase portrait uses $\delta_L$ such that $\lambda$ is also an eigenvalue of $A_L$.
Here the attractor belongs to $\cM$,
and Fig.~\ref{fig:QQ_d} shows a phase portrait of its restriction to $\cM$.

The above examples appear to be typical in that
similar dynamics is observed over wide ranges of parameter values.
The following result explains the above observations.

\begin{proposition}
Suppose $\lambda \in \mathbb{R}$ is an algebraic multiplicity one eigenvalue of $A_L$ and $A_R$.
Let $u^{\sf T}$ and $v$ be corresponding left and right eigenvectors of $A_R$.
Also suppose $\lambda \ne 1$ and $c^{\sf T} v \ne 0$.
Then for any $x \in \mathbb{R}^n$,
\begin{equation}
\phi(g(x)) = \lambda \phi(x),
\label{eq:distanceMap}
\end{equation}
where
\begin{equation}
\phi(x) = u^{\sf T} x - \frac{u^{\sf T} b}{1 - \lambda}.
\label{eq:phi}
\end{equation}
\label{pr:shared}
\end{proposition}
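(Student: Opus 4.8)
The plan is to reduce the statement to the single linear-algebraic fact that $u^{\sf T}$ is also a \emph{left} eigenvector of $A_L$ for the eigenvalue $\lambda$, and then to establish that fact from the continuity constraint \eqref{eq:continuityConstraint} together with the matrix determinant lemma. For the reduction, I would note that for any $x$ we have $\phi(g(x)) = u^{\sf T}g(x) - \frac{u^{\sf T}b}{1-\lambda}$, where $g(x)$ equals either $A_L x + b$ or $A_R x + b$. Since $u^{\sf T}A_R = \lambda u^{\sf T}$ by hypothesis, the right piece already yields $u^{\sf T}g_R(x) = \lambda u^{\sf T}x + u^{\sf T}b$; and once $u^{\sf T}A_L = \lambda u^{\sf T}$ is known, the left piece yields the same expression, so the computation becomes piece-independent. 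Then $\phi(g(x)) = \lambda u^{\sf T}x + u^{\sf T}b - \frac{u^{\sf T}b}{1-\lambda} = \lambda u^{\sf T}x - \frac{\lambda u^{\sf T}b}{1-\lambda} = \lambda\phi(x)$, the hypothesis $\lambda\ne 1$ being exactly what makes $\phi$ well defined (and being used nowhere else). So it suffices to prove $u^{\sf T}A_L = \lambda u^{\sf T}$.

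By \eqref{eq:continuityConstraint}, $u^{\sf T}A_L = u^{\sf T}A_R - (u^{\sf T}p)c^{\sf T} = \lambda u^{\sf T} - (u^{\sf T}p)c^{\sf T}$, so (since $c\ne 0$) the claim is equivalent to $u^{\sf T}p = 0$. To obtain this I would apply the matrix determinant lemma to the rank-one update $A_R - \lambda I = (A_L - \lambda I) + p c^{\sf T}$, giving $\det(A_R - \lambda I) = \det(A_L - \lambda I) + c^{\sf T}\operatorname{adj}(A_L - \lambda I)\,p$; both determinants vanish because $\lambda$ is an eigenvalue of both $A_L$ and $A_R$, leaving $c^{\sf T}\operatorname{adj}(A_L - \lambda I)\,p = 0$. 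Because $\lambda$ has algebraic (hence geometric) multiplicity one for $A_L$, the matrix $A_L - \lambda I$ has rank $n-1$, so its adjugate is rank one and can be written $\operatorname{adj}(A_L - \lambda I) = \beta\,w\,\tilde u^{\sf T}$ with $\beta\ne 0$ and $w$, $\tilde u^{\sf T}$ nonzero right and left eigenvectors of $A_L$ for $\lambda$. Hence $(c^{\sf T}w)(\tilde u^{\sf T}p) = 0$.

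It then remains to convert this into $u^{\sf T}p = 0$, which I would do by a case split on the two factors. If $\tilde u^{\sf T}p = 0$, then $\tilde u^{\sf T}A_R = \tilde u^{\sf T}A_L + (\tilde u^{\sf T}p)c^{\sf T} = \lambda\tilde u^{\sf T}$, so $\tilde u^{\sf T}$ is a left eigenvector of $A_R$ for $\lambda$; by multiplicity one it is a scalar multiple of $u^{\sf T}$, giving $u^{\sf T}p = 0$. If instead $c^{\sf T}w = 0$, then $A_R w = A_L w + (c^{\sf T}w)p = \lambda w$, so $w$ is a scalar multiple of the right eigenvector $v$ of $A_R$; thus $v$ is a right eigenvector of $A_L$ for $\lambda$, and comparing $A_R v$ with $A_L v$ through \eqref{eq:continuityConstraint} forces $(c^{\sf T}v)p = 0$, whence $p = 0$ since $c^{\sf T}v\ne 0$, so $u^{\sf T}p = 0$ trivially. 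I expect this to be the only real obstacle: working with the eigendata of $A_R$ and the continuity constraint alone collapses to a tautology, so one genuinely has to inject the spectral structure of $A_L$ — via the adjugate, equivalently via $\det(A_L - \lambda I) = 0$ — to break the symmetry and pin down $u^{\sf T}p$.
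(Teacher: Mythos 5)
Your proof is correct, and it rests on the same two ingredients as the paper's: the matrix determinant lemma \eqref{eq:matrixDetLemma} applied to the rank-one update \eqref{eq:continuityConstraint}, and the rank-one factorisation of the adjugate of a multiplicity-one eigenvalue's characteristic matrix (Lemma \ref{le:adjRankOne}). The one substantive difference is which matrix you factor. You expand ${\rm adj}(A_L - \lambda I) = \beta\, w\, \tilde{u}^{\sf T}$ in terms of eigenvectors of $A_L$, and since these are not the vectors named in the proposition you are forced into a case split on $(c^{\sf T} w)(\tilde{u}^{\sf T} p) = 0$, transferring eigenvectors between $A_L$ and $A_R$ in each branch to recover $u^{\sf T} p = 0$. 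The paper instead scales $u$ and $v$ so that ${\rm adj}(\lambda I - A_R) = v u^{\sf T}$ and applies the determinant lemma with that adjugate, so the identity reads $c^{\sf T} v\, u^{\sf T} p = 0$ directly in the given vectors; the hypothesis $c^{\sf T} v \ne 0$ then kills the first factor immediately and no case analysis is needed. Both arguments are sound (your second case in fact shows $p=0$, i.e.\ it can only occur when $A_L = A_R$), but factoring the adjugate on the $A_R$ side is the move that makes the hypothesis $c^{\sf T} v \ne 0$ do its work in a single line. Your observation that $\lambda \ne 1$ enters only through the definition of $\phi$, and your closing remark that the spectral data of $A_L$ must be injected via $\det(\lambda I - A_L) = 0$ to get any traction, both accurately reflect the structure of the paper's argument.
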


Proposition \ref{pr:shared} is proved in \ref{app:adjugates}.
If $1$ is not an eigenvalue of $A_R$,
then $X = (I - A_R)^{-1} b$ is well-defined
and $u^{\sf T} X = \frac{u^{\sf T} b}{1 - \lambda}$.
In this case $\cM$, defined by \eqref{eq:M}, is the set of points for which $\phi(x) = 0$.
By \eqref{eq:distanceMap}, $\cM$ is invariant under $g$.
Forward orbits converge to $\cM$ if $|\lambda| < 1$,
and diverge from $\cM$ if $|\lambda| > 1$.
In either case, any bounded invariant set of $g$ must belong to $\cM$.

\begin{remark}
The restriction of $g$ to $\cM$ is an $(n-1)$-dimensional piecewise-linear map.
In Fig.~\ref{fig:QQ_b} this map is one-dimensional
and its dynamics are well understood
(it has a single-interval chaotic attractor \cite{ItTa79}).
In Fig.~\ref{fig:QQ_d} the map is two-dimensional
and it may be possible to prove it has a two-dimensional attractor
via the methodology of Glendinning \cite{Gl16e}.
\end{remark}

\begin{remark}
For both examples the attractor appears to vary continuously with $\tau_L$
in Hausdorff metric \cite{GlSi20b}, and in measure \cite{AlPu17}.
Thus our understanding of the attractors obtained via dimension reduction
provides a broad explanation for the nature of the attractor at nearby values of the parameters.
\end{remark}

\begin{remark}
The manifold $\cM$ intersects $\Sigma$ transversally
unless $u$ is a scalar multiple of $c$.
This occurs only when $A_L$ and $A_R$ share {\em all} their eigenvalues, for the following reason.
Let $y$ be a right eigenvector corresponding to any eigenvalue $\nu \ne \lambda$ of $A_R$.
Since $u^{\sf T} y = 0$, if $u$ is a scalar multiple of $c$ then $c^{\sf T} y = 0$ and so
$A_L y = \nu y$ by \eqref{eq:continuityConstraint},
hence $\nu$ is also an eigenvalue of $A_L$.
\end{remark}

\section{Zero eigenvalues}
\label{sec:zero}

Now suppose $0$ is an eigenvalue of $A_L$ or $A_R$;
take $A_L$ without loss of generality.
Then the range of $g_L$ is an affine subspace $\cN \subset \mathbb{R}^n$
of dimension at most $n-1$.
In applications the presence of a zero eigenvalue is remarkably common \cite{Si24f},
in particular Poincar\'e maps of Filippov systems have a degenerate range
whenever one piece of the map corresponds to trajectories with segments of sliding motion \cite{DiKo03}.

Consider the most generic situation that $0$ is an algebraic multiplicity one eigenvalue of $A_L$,
and also suppose $1$ is not an eigenvalue of $A_L$.
Then $g_L$ has the unique fixed point
\begin{equation}
Y = (I - A_L)^{-1} b,
\label{eq:Y}
\end{equation}
and $\cN$ is $(n-1)$-dimensional, contains $Y$, and has directions given by right eigenvectors of all non-zero eigenvalues of $A_L$.
So $\cN$ is identical to $\cM$ of the previous section,
but uses the left piece of $g$ and $\lambda = 0$.

Forward orbits of $g$ either become constrained to the right half-plane $c^{\sf T} x \ge 0$,
where they are easy to understand because they are governed by the affine map $g_R$,
or repeatedly enter the left half-plane where they immediately map to $\cN$.
In the latter case we can obtain an $(n-1)$-dimensional characterisation of the dynamics via an induced map $G$
defined by the first return to $\cN$.
Specifically, for any $x \in \cN$ we define
\begin{equation}
G(x) = g^j(x),
\label{eq:inducedMap}
\end{equation}
for the smallest $j \ge 1$ for which $g^j(x) \in \cN$.

\begin{figure}[b!]
\begin{center}
\includegraphics[width=\textwidth]{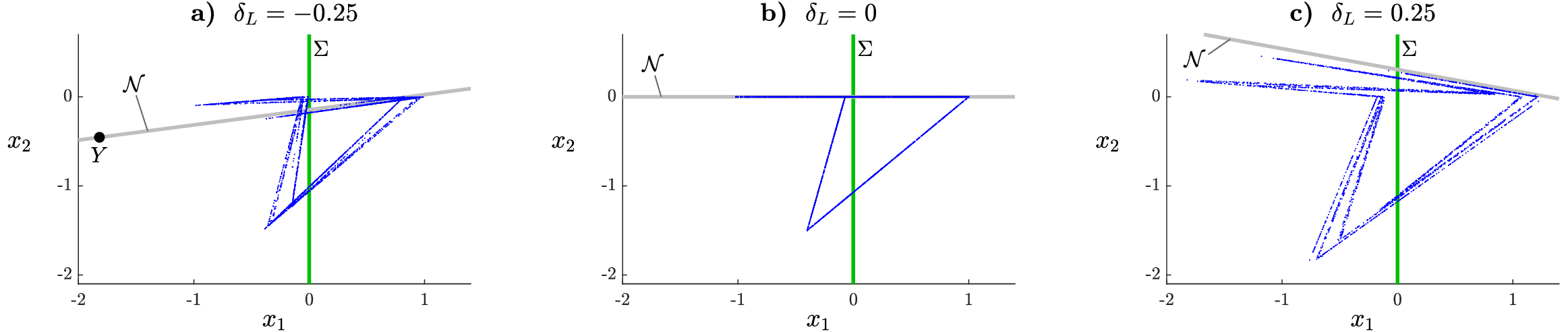}
\caption{
Phase portraits of \eqref{eq:g} with \eqref{eq:2dform}, \eqref{eq:2dparam2},
and three different values of $\delta_L$.
In each plot, $\cN$, defined by \eqref{eq:N}, is invariant under the left piece of the map.
In panel (b), every $x \in \mathbb{R}^2$ with $x_1 \le 0$ maps to $\cN$.
Note, in panels (b) and (c) the fixed point $Y$ lies outside the plotted range.
\label{fig:QQ_e}
} 
\end{center}
\end{figure}

\begin{figure}[b!]
\begin{center}
\includegraphics[height=4cm]{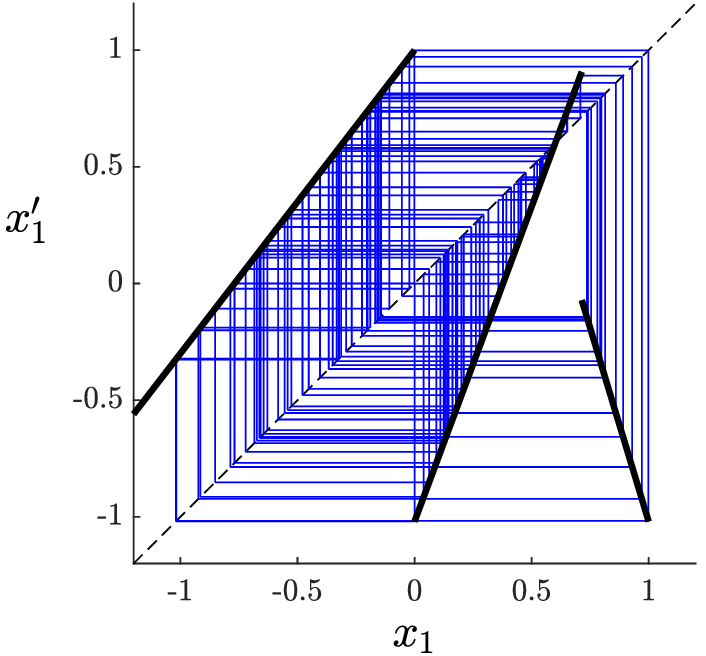}
\caption{
A cobweb diagram of the induced map $G$
(defined by the first return of orbits to $\cN$)
for the map shown in Fig.~\ref{fig:QQ_e}b.
Specifically, $\left( x_1', 0 \right) = G(x_1,0)$,
where $G$ is defined by \eqref{eq:inducedMap}.
\label{fig:QQ_f}
} 
\end{center}
\end{figure}

As an example, Fig.~\ref{fig:QQ_e} uses the two-dimensional BCNF with
\begin{align}
\tau_L &= 1.3, &
\tau_R &= -1.4, &
\delta_R &= 1.5,
\label{eq:2dparam2}
\end{align}
and three different values of $\delta_L$.
Each plot shows a typical forward orbit (with transient removed) and the line 
\begin{equation}
\cN = \left\{ x \in \mathbb{R}^n \,\middle|\, w^{\sf T} (x - Y) = 0 \right\},
\label{eq:N}
\end{equation}
where $w^{\sf T}$ is a left eigenvector of $A_L$ corresponding to its smallest eigenvalue in absolute value.
Notice $\delta_L$ is the determinant of $A_L$,
so $0$ is an eigenvalue of $A_L$ when $\delta_L = 0$,
and in this case the range of $g_L$ is given by \eqref{eq:N}.
For the map with $\delta_L = 0$, Fig.~\ref{fig:QQ_f} shows a cobweb diagram of the induced map $G$.
This map is discontinuous at points where the value of $j$ in \eqref{eq:inducedMap} changes.
Notice $G$ is piecewise-expanding (each piece has slope with absolute value greater than $1$),
so has a chaotic attractor by the results of Li and Yorke \cite{LiYo78}.
For further analysis of such dynamics refer to Kowalczyk \cite{Ko05}.

As another example Fig.~\ref{fig:QQ_g}, uses the three-dimensional BCNF with
\begin{align}
\tau_L &= 1.6, &
\sigma_L &= 0.8, &
\tau_R &= -1.5, &
\sigma_R &= 0, &
\delta_R &= 1,
\label{eq:3dparam2}
\end{align}
and three values of $\delta_L$.
Again, when $\delta_L = 0$ all points with $x_1 \le 0$ map to $\cN$.
Fig.~\ref{fig:QQ_h} shows the corresponding attractor of $G$.

As in the previous section both examples appear to involve robust chaos \cite{Gl17}.
The attractor varies continuously with parameters,
so our understanding of the attractor when $\delta_L = 0$ obtained using the induced map
extends to small $\delta_L \ne 0$.

\begin{figure}[h!]
\begin{center}
\includegraphics[width=\textwidth]{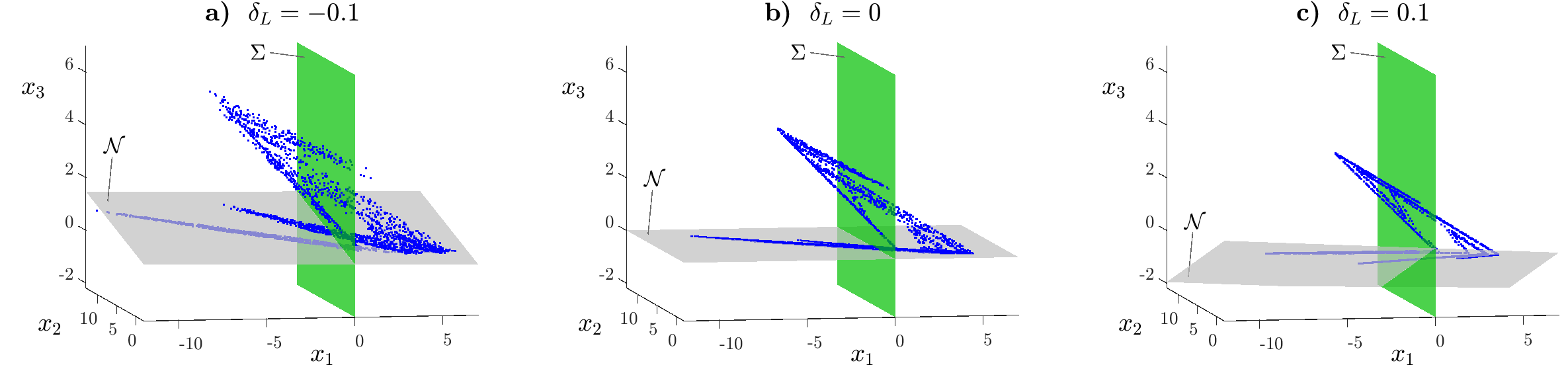}
\caption{
Phase portraits of \eqref{eq:g} with \eqref{eq:3dform}, \eqref{eq:3dparam2},
and three different values of $\delta_L$.
\label{fig:QQ_g}
} 
\end{center}
\end{figure}

\begin{figure}[h!]
\begin{center}
\includegraphics[height=4cm]{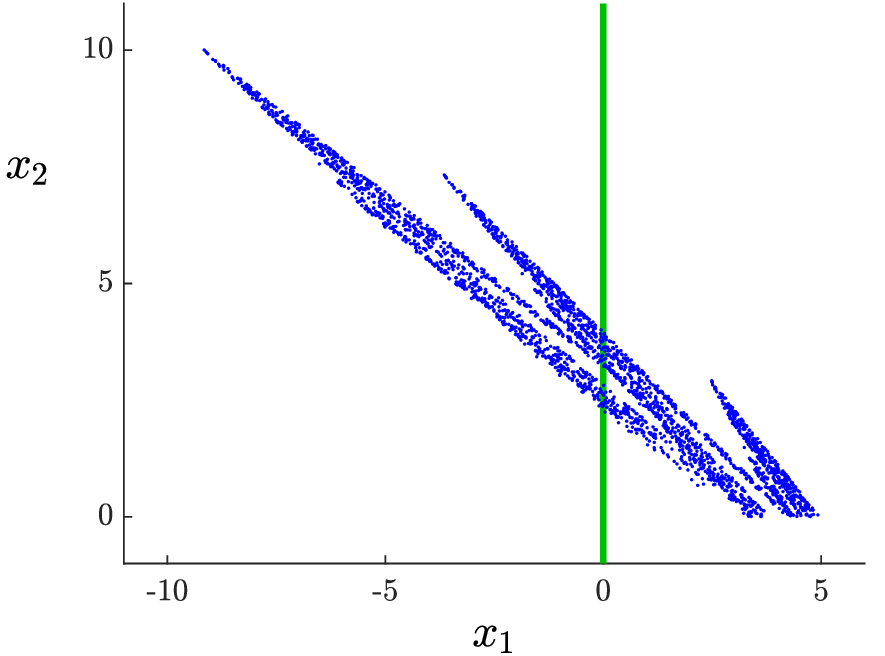}
\caption{
A phase portrait of the induced map $G$
for the map shown in Fig.~\ref{fig:QQ_g}b.
\label{fig:QQ_h}
} 
\end{center}
\end{figure}

\section{Modulus one eigenvalues}
\label{sec:nonhyperbolic}

Finally we review BCBs for which $A_L$ or $A_R$ has an eigenvalue with unit modulus,
for details see \cite{CoDe10,Si10}.
Here the dynamics of the piecewise-linear approximation is degenerate \cite{SuGa10},
so it is necessary to return to the general form \eqref{eq:f}.

Suppose a two-parameter family of maps \eqref{eq:f} admits a curve of BCBs.
Further suppose that at some point on this curve the fixed point of one piece of the map has an eigenvalue $1$.
In this case we can use a centre manifold of this piece of the map to perform dimension reduction
and obtain a one-dimensional description for some of its dynamics.
Indeed this analysis shows there exists a curve of saddle-node bifurcations
emanating from the BCB curve with a quadratic tangency, Fig.~\ref{fig:nonhyp}a.

If instead the fixed point of one piece of the map has an eigenvalue $-1$,
or eigenvalues ${\rm e}^{\pm {\rm i} \theta}$, where $0 < \theta < \pi$
and $\theta \ne \frac{2 \pi}{3}, \frac{\pi}{2}$,
then a curve of period-doubling or Neimark-Sacker bifurcations emanates from the BCB curve,
Fig.~\ref{fig:nonhyp}b,c.
Again we can study the dynamics on the centre manifold for one piece of the map,
and this shows that the period-doubled solution and invariant circle
collide with the switching manifold along curves
that are quadratically tangent to the period-doubling or Neimark-Sacker curves at the codimension-two points.
The subsequent change in dynamics brought out by these collisions
can be complex and remains to be fully explored \cite{Si16}.

\begin{figure}[h!]
\begin{center}
\includegraphics[width=8cm]{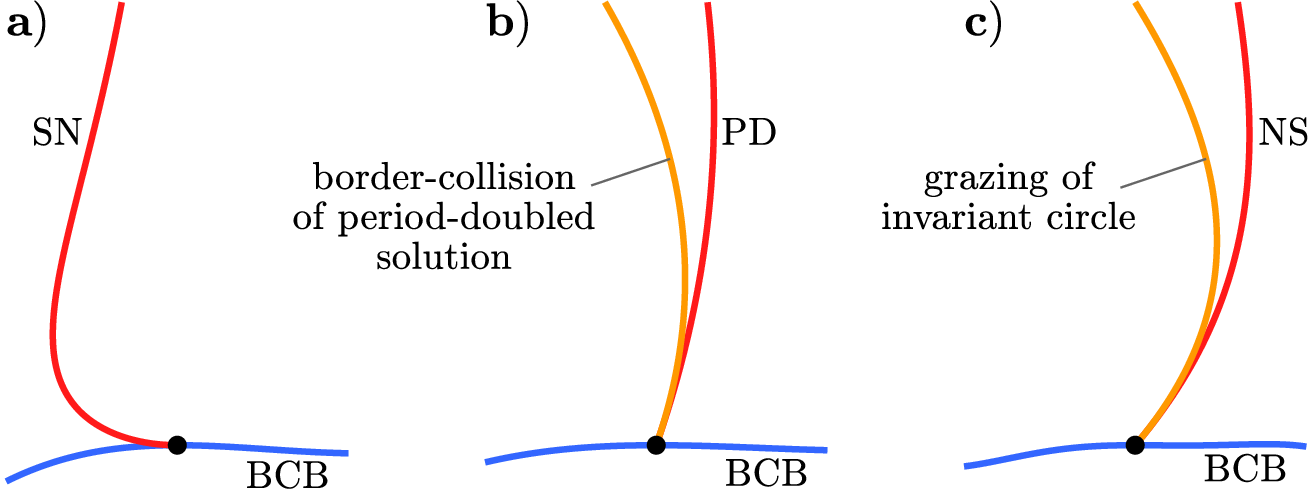}
\caption{
Sketches of two-parameter bifurcation diagrams for maps of the form \eqref{eq:f}
(BCB: border-collision bifurcation; SN: saddle-node bifurcation;
PD: period-doubling bifurcation; NS: Neimark-Sacker bifurcation).
\label{fig:nonhyp}
} 
\end{center}
\end{figure}

\section{Discussion}
\label{sec:conc}

For one-dimensional maps, the dynamics associated with BCBs are well understood \cite{GlSi24,NuYo95,SuAv16}.
For two-dimensional maps, the main features of these dynamics are well documented, e.g.~\cite{BaGr99},
while for higher dimensioanal maps the dynamics are challenging to catagorise systematically
because there are many parameters;
also the result of Glendinning and Jeffrey \cite{GlJe15}
shows that more dynamics is possible with every additional dimension.

However, many high-dimensional mathematical models with BCBs
contain symmetry, vastly differing time scales, or a related degeneracy.
By exploiting such features we hope to obtain a lower dimensional description of the dynamics
to which known theoretical results can be applied.

This Letter describes three codimension-two scenarios for which dimension reduction can be employed.
Two of the scenarios give an invariant codimension-one manifold.
At nearby parameter values there is no such invariant manifold,
which is a notable departure from normally hyperbolic manifolds of smooth maps that necessarily persist \cite{HiPu77}.
However, when the attracting objects vary continuously with parameters, as is often the case,
the reduction is still able to explain the overall nature of the long-term dynamics in open subsets of parameter space.

\section*{Acknowledgements}

This work was supported by Marsden Fund contract MAU2209 managed by Royal Society Te Ap\={a}rangi.

\appendix

\section{Adjugate matrices and calculations for shared eigenvalues}
\label{app:adjugates}

Here we use adjugate matrices to prove Proposition \ref{pr:shared}.
Most of the effort is in showing that the left eigenvector $u^{\sf T}$ of $A_R$, is also an eigenvector of $A_L$.

The {\em adjugate} of a matrix $A \in \mathbb{R}^{n \times n}$ is denoted ${\rm adj}(A)$
and is defined to have $(i,j)$-entry $(-1)^{i + j} m_{ji}$ for each $i,j = 1,\ldots,n$,
where $m_{ji}$ is the determinant of $A$ without its $i^{\rm th}$ column and $j^{\rm th}$ row.
The adjugate matrix obeys the identity,
\begin{equation}
{\rm adj}(A) A = A \,{\rm adj}(A) = \det(A) I,
\label{eq:adjId}
\end{equation}
so if $A$ is invertible then $A^{-1} = \frac{{\rm adj}(A)}{\det(A)}$.
The adjugate matrix appears in the matrix determinant lemma
\begin{equation}
\det \left( A + r q^{\sf T} \right) = \det(A) + q^{\sf T} {\rm adj}(A) r,
\label{eq:matrixDetLemma}
\end{equation}
which holds for any $A \in \mathbb{R}^{n \times n}$ and $q, r \in \mathbb{R}^n$.
To prove Proposition \ref{pr:shared} we use the following elementary result, see Sinkhorn \cite{Si93}.

\begin{lemma}
If $\lambda \in \mathbb{R}$ is an eigenvalue of $A \in \mathbb{R}^{n \times n}$ with algebraic multiplicity one,
then
\begin{equation}
{\rm adj}(\lambda I - A) = v u^{\sf T},
\label{eq:adjRankOne}
\end{equation}
for some non-zero vectors $u, v \in \mathbb{R}^n$.
\label{le:adjRankOne}
\end{lemma}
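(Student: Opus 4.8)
The plan is to set $B = \lambda I - A$ and read off the rank-one structure of ${\rm adj}(B)$ directly from the two-sided identity \eqref{eq:adjId}. Since $\lambda$ is an eigenvalue of $A$ we have $\det(B) = 0$, so \eqref{eq:adjId} collapses to $B\,{\rm adj}(B) = {\rm adj}(B)\,B = 0$. The first equation says every column of ${\rm adj}(B)$ lies in $\ker(B)$; the second says every row lies in the left null space $\ker(B^{\sf T})$. So the content of the lemma is really that both of these null spaces are one-dimensional and that ${\rm adj}(B)$ is not the zero matrix.

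First I would pin down $\dim\ker(B)$. Because the algebraic multiplicity of $\lambda$ equals one, its geometric multiplicity also equals one (the geometric multiplicity is at least one for any eigenvalue and never exceeds the algebraic multiplicity), so $\dim\ker(B) = 1$; fix a non-zero $v$ spanning it. Then $B\,{\rm adj}(B) = 0$ forces each column of ${\rm adj}(B)$ to be a scalar multiple of $v$, i.e.\ ${\rm adj}(B) = v\,u^{\sf T}$ for some $u \in \mathbb{R}^n$ (the vector $u$ just collects these scalars). It remains to check $u \ne 0$, equivalently ${\rm adj}(B) \ne 0$. I would do this in one of two ways: either note that $\dim\ker(B) = 1$ gives ${\rm rank}(B) = n-1$, so $B$ has a nonsingular $(n-1)\times(n-1)$ submatrix, whose determinant is (up to sign) an entry of ${\rm adj}(B)$; or invoke Jacobi's formula $\frac{d}{dt}\det(tI - A)\big|_{t=\lambda} = {\rm tr}\!\left({\rm adj}(\lambda I - A)\right)$ and use that $\lambda$ is a simple root of the characteristic polynomial, so the left-hand side is non-zero and hence ${\rm adj}(B) \ne 0$. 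Either route gives $u \ne 0$, and $v \ne 0$ by construction, which finishes the proof.

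I do not expect a genuine obstacle here; the one point to handle carefully is that the multiplicity-one hypothesis is used twice, for two logically distinct conclusions — once to force $\dim\ker(B) = 1$ (so that ${\rm adj}(B)$ has rank \emph{at most} one) and once to guarantee ${\rm adj}(B)$ is not identically zero (so the rank is exactly one). As a byproduct worth recording for Proposition \ref{pr:shared}, combining ${\rm adj}(B)\,B = 0$ with ${\rm adj}(B) = v u^{\sf T}$ and $v \ne 0$ yields $u^{\sf T} B = 0$, i.e.\ $u^{\sf T}$ is a left eigenvector of $A$ for $\lambda$; symmetrically $v$ is a right eigenvector, which is how $u$ and $v$ will be used downstream.
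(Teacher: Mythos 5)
Your proof is correct. Note that the paper does not actually prove this lemma --- it simply cites Sinkhorn \cite{Si93} and moves on --- so there is no in-paper argument to compare against; what you have supplied is a complete, self-contained proof of the cited fact. The two halves of your argument are both sound: the identity \eqref{eq:adjId} with $\det(\lambda I - A)=0$ confines the columns of ${\rm adj}(\lambda I - A)$ to the one-dimensional kernel (using geometric multiplicity $\le$ algebraic multiplicity $=1$), giving rank at most one, and either the nonsingular $(n-1)\times(n-1)$ submatrix or Jacobi's formula at a simple root rules out the zero matrix, giving rank exactly one. Your closing observation --- that ${\rm adj}(\lambda I - A)\,(\lambda I - A) = (\lambda I - A)\,{\rm adj}(\lambda I - A) = 0$ forces $u^{\sf T}$ and $v$ to be left and right eigenvectors --- is precisely the fact the paper asserts without comment in the first line of its proof of Proposition \ref{pr:shared}, so recording it here is a useful addition rather than a digression.
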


\begin{proof}[Proof of Proposition \ref{pr:shared}]
The vectors $u^{\sf T}$ and $v$ in \eqref{eq:adjRankOne} are necessarily left and right eigenvectors of $A$ corresponding to $\lambda$,
so we can assume the vectors $u$ and $v$ in Proposition \ref{pr:shared} have been scaled so that \eqref{eq:adjRankOne} holds.
Then by \eqref{eq:matrixDetLemma} and \eqref{eq:continuityConstraint},
\begin{equation}
\det(\lambda I - A_R) = \det(\lambda I - A_L) + c^{\sf T} {\rm adj}(\lambda I - A_L) p.
\nonumber
\end{equation}
But $\det(\lambda I - A_L) = \det(\lambda I - A_R) = 0$, so
\begin{equation}
0 = c^{\sf T} v u^{\sf T} p.
\nonumber
\end{equation}
Also $c^{\sf T} v \ne 0$, thus $u^{\sf T} p = 0$.
Hence, by \eqref{eq:continuityConstraint}, $u^{\sf T}$ is also a left eigenvector of $A_L$ corresponding to $\lambda$.

Now choose any $x \in \cM$ and let $Z = L$ if $c^{\sf T} x < 0$, and $Z = R$ otherwise.
Since $u^{\sf T} A_Z = \lambda u^{\sf T}$,
\begin{equation}
\phi(g(x)) = u^{\sf T} (A_Z x + b) - \frac{u^{\sf T} b}{1 - \lambda}
= \lambda u^{\sf T} x - \frac{\lambda u^{\sf T} b}{1 - \lambda}
= \lambda \phi(x),
\nonumber
\end{equation}
as required.
\end{proof}

{\footnotesize

}

\end{document}